\documentclass{amsart}

\usepackage{amsmath,amssymb,url}
\usepackage{enumitem} 

\numberwithin{equation}{section}

\theoremstyle{plain}
\newtheorem{theorem}{Theorem}
\newtheorem{lemma}[theorem]{Lemma}

\theoremstyle{definition}

\DeclareMathOperator{\Mod}{Mod}

\newcommand{\M}[1]{\operatorname{M}(#1)}
\def\FF{\mathcal{F}}
\def\Pa{\mathsf{Pa}}
\def\qb{\mathbf{qb}}
\def\BB{\bar{\mathbf{B}}}
\def\ppto{\twoheadrightarrow}
\def\F{\mathbf{F}}
\def\A{\mathbf{A}}

\begin{document}


\title[Two questions of K--S]{Two questions of Kowalski--Słomczy\'nska}



\author[Z. Gyenis]{Zal\'an Gyenis}
\address{Jagiellonian University \\
	Grodzka 52 \\
	31-007 Krak\'ow \\
	Poland}
\urladdr{}
\email{zalan.gyenis@uj.edu.pl}

\subjclass{06D15, 08B20, 06D20, 08B15}

\keywords{Distributive p-algebras, Quasivarieties, Implication-free fragment of intuitionism.}

\begin{abstract}
    That the inclusion $Q(F_3(X))\subseteq \Pa^-_3$ is proper and
    $\Mod(\qb_3)$ is not generated by free $p$-algebras is proved, 
    answering two open questions from Kowalski--Słomczyńska \cite{KowSlom}.
\end{abstract}

\maketitle

\noindent Following the framework and notation of Kowalski and Słomczyńska [1], we omit formal definitions here for the sake of brevity. All terminology used in this note is introduced in their original work.

Let $P$ be the poset of height $2$ with four maximal elements $\{a,b,c,d\}$ and minimal elements $t_{abc}$, $t_{abd}$, $t_{acd}$, and $t_{bcd}$ with order relations $t_{uvw}\le u,v,w$ and no other elements compared. Let $\A = \varepsilon(P)$ be the dual algebra.
For an element $x$ of a poset, $\M{x}$ denotes the set of maximal elements above $x$.

\begin{lemma}\label{lemma:1}
    $\A\in \Pa_3$.
\end{lemma}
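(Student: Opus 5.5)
The plan is to reduce the statement to a purely combinatorial check on the finite poset $P$. We use the duality set up in \cite{KowSlom}: finite distributive $p$-algebras correspond to finite posets via $\varepsilon$, with the $p$-algebra operations read off from the sets of maximal elements. Under this duality, membership of a finite algebra $\varepsilon(Q)$ in $\Pa_n$ should be expressed by a first-order condition on $Q$ in terms of the sets $\M{x}$. I expect this condition to be the bound $|\M{x}|\le n$ for every $x\in Q$, which is precisely why the paper introduces the notation $\M{x}$. So the first step is to recall, from \cite{KowSlom}, the characterization of the finite members of $\Pa_n$ by the dual condition.

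If the definition of $\Pa_n$ there is given syntactically rather than dually, the first step instead becomes the translation. That definition is presumably a quasi-identity or identity in the language of $p$-algebras, such as the one associated with $\qb_n$. We would translate its failure in $\varepsilon(Q)$ into the existence of a point $x\in Q$ lying below $n+1$ distinct maximal elements. Concretely, we would take elements $a_0,\dots,a_n$ of $\varepsilon(Q)$ witnessing failure and pass to prime filters, that is, to points of $Q$. The pseudocomplement $\neg a$ corresponds to the down-set of points whose maximal elements all avoid $a$. Using this, we would extract a point whose set $\M{x}$ meets $n+1$ pairwise disjoint regions. The converse direction works by choosing each $a_i$ to be the down-set generated by a single maximal element. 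This translation is the step I expect to be the main obstacle, because the exact form of the defining condition of $\Pa_n$ has to be matched carefully.

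Once the dual criterion is available, the verification for $\A=\varepsilon(P)$ is immediate.
\begin{itemize}
\item For a maximal element $u\in\{a,b,c,d\}$ we have $\M{u}=\{u\}$.
\item For each minimal element $t_{uvw}$ we have $\M{t_{uvw}}=\{u,v,w\}$, since by construction $t_{uvw}$ lies below exactly these three maximal elements.
\end{itemize}
Hence $|\M{x}|\le 3$ for all $x\in P$, and $\A\in\Pa_3$.

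It is worth noting that no point of $P$ has $|\M{x}|=2$, and that the four maximal elements have no common lower bound. Any additional clause in the characterization of $\Pa_3$ would have to concern elements with $|\M{x}|\le 3$. This absence of two-element sets $\M{x}$ is presumably what later separates $\A$ from $Q(F_3(X))$, which is the point of choosing this particular $P$.
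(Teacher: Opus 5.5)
Your route differs from the paper's, and it is sound once the criterion you rely on is actually established.

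The paper never uses a dual characterization of $\Pa_3$. It uses only $\Pa_3=V(\BB_3)$, so an embedding $\A\hookrightarrow\BB_3^4$ suffices. It produces that embedding dually by an explicit surjective pp-morphism $\biguplus_{i=1}^4\delta(\BB_3)\ppto P$: the root of the copy indexed by $t_{uvw}$ goes to $t_{uvw}$, and its three maxima go bijectively onto $u,v,w$.

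You instead invoke the criterion ``$\varepsilon(Q)\in\Pa_n$ iff $|\M{x}|\le n$ for all $x\in Q$''. Given that criterion, your count ($\M{u}=\{u\}$, $\M{t_{uvw}}=\{u,v,w\}$) is correct and immediate. The criterion is true: it is the dual form of Lee's theorem that a distributive $p$-algebra lies in the $n$-th Lee variety iff every prime ideal contains at most $n$ minimal prime ideals. With that citation your proof is complete. As written, however, you leave it as an expectation and call it the main obstacle, and it carries all the content of the lemma.

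You need only one direction, ``$|\M{x}|\le n$ for all $x$ implies $\varepsilon(Q)\in\Pa_n$'', and it needs no syntactic translation:
\begin{enumerate}
\item For each $x\in Q$, map a copy of $\delta(\BB_n)$ by sending its root to $x$ and its $n$ maxima onto $\M{x}$. This is possible because $1\le|\M{x}|\le n$.
\item The disjoint union of these maps is a surjective pp-morphism onto $Q$.
\item Hence $\varepsilon(Q)$ embeds into a finite power of $\BB_n$.
\end{enumerate}
For $P$ this is exactly the paper's proof; the four minimal points suffice, since the maxima are hit anyway. So the paper's argument is the self-contained easy half of the criterion you cite, and it yields a concrete embedding into $\BB_3^4$. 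Your version gives a reusable test for arbitrary finite posets, but imports a two-sided theorem of which only the easy half is used.

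One correction: the defining condition of $\Pa_n$ is not ``the one associated with $\qb_n$''. As in the proof of Lemma~\ref{lemma:3}, $\varepsilon(Q)\models\qb_3$ iff there is no surjective pp-morphism $Q\ppto\delta(\BB_3)$. This is a different condition: for example, $\BB_3\in\Pa_3$ but $\BB_3\not\models\qb_3$, witnessed by the identity on $\delta(\BB_3)$. Translating $\qb_n$ would have led you to the wrong criterion.
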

\begin{proof}
    As $\Pa_3 = V(\BB_3)$ it is enough to show that $A$ embeds into $\BB_3^4$. By duality, this amounts to constructing
    a pp-morphism $\biguplus_{i=1}^{4}\delta(\BB_3)\ppto P$.
    For each minimal element $t$ of $P$ let $S_t$ 
    be a copy of $\delta(\BB_3)$. $S_t$ has $3$ maximal elements and one bottom element below all three. Define a map $g:\uplus_{t} S_t \twoheadrightarrow P$
    as follows: on the copy $S_{t_{abc}}$ map its three 
    maximal elements bijectively to $a,b,c$, and its bottom 
    element to $t_{abc}$; similarly for the other $S_t$'s.
    Then $g$ is a surjective pp-morphism: it is clearly order-preserving, and for each bottom element we have that the image of its three maximal extensions is exactly the set of maximal extensions of the corresponding $t_{abc}$.
\end{proof}

\begin{lemma}\label{lemma:2}
    $\A\notin Q(\mathcal{F})$, where $\mathcal F$ is the class of free $p$-algebras. In particular, $\A\notin Q(\F_3(X))$ for any $X$.
\end{lemma}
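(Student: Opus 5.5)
The plan is to reduce to finite free algebras and then find a combinatorial property of their duals that no pp-morphism onto $P$ can respect.

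\emph{Step 1: reduction.} The variety of distributive $p$-algebras is locally finite and $\A$ is finite. So $\A\in Q(\mathcal F)=SPP_U(\mathcal F)$ should imply that $\A$ embeds into a finite product of finitely generated subalgebras of free $p$-algebras. To see this, separate the finitely many pairs of distinct elements of $\A$ by homomorphisms into members of $P_U(\mathcal F)$. Each such homomorphism has finite image, and a finite subalgebra of an ultraproduct is a subalgebra of almost every factor. A finitely generated subalgebra of $\F(X)$ lies in $\F(Y)$ for some finite $Y\subseteq X$, and $\F(Y)$ is a subalgebra of $\F(X)$, so in fact $\A\in SP(\{\F_n:n<\omega\})$. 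This is the same argument one uses for $\Pa_3=V(\BB_3)$ in Lemma~\ref{lemma:1}. Dually, we obtain a surjective pp-morphism $g\colon\biguplus_i\delta(\F_{n_i})\ppto P$. In particular, some point $x$ of some $\delta(\F_n)$ satisfies $g(x)=t_{abc}$.

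\emph{Step 2: an extension property of $\delta(\F_n)$.} Using the known (Urquhart-style) description of the dual of the finitely generated free $p$-algebra, I will prove the following. For every point $x$ and every subset $S\subseteq\M{x}$ with $|S|\ge 2$, there is a non-maximal point $y\ge x$ with $\M{y}=S$. The idea is that points of $\delta(\F_n)$ are built recursively from antichains of already constructed points, each carrying a colour in $2^n$. Given $x$, the point determined by the antichain $S$ with the colour of $x$ is $\ge x$, has exactly $S$ as its maximal points, and is not maximal. Pinning down this recursive description precisely, and checking that it really yields such $y$ above $x$, is the step I expect to be the main obstacle. If the direct description is awkward to use, the fallback is to verify the property on the generating construction of $\delta(\F_n)$ as a limit of the finite stages.

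\emph{Step 3: contradiction.} Since $g$ is a pp-morphism, $g(\M{x})=\M{g(x)}=\{a,b,c\}$. Choose $m_a,m_b\in\M{x}$ with $g(m_a)=a$ and $g(m_b)=b$, and apply Step 2 to $S=\{m_a,m_b\}$. This gives $y\ge x$ with $\M{y}=S$, hence $g(y)\ge t_{abc}$ and $\M{g(y)}=g(S)=\{a,b\}$. The only elements of $P$ above $t_{abc}$ are $t_{abc},a,b,c$. Their sets of maximal elements above them are $\{a,b,c\}$, $\{a\}$, $\{b\}$ and $\{c\}$, none of which equals $\{a,b\}$. This is a contradiction, so $\A\notin Q(\mathcal F)$. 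Since $Q(\F_3(X))\subseteq Q(\mathcal F)$, the second claim follows immediately.
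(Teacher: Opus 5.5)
Your route is the paper's. You reduce to a finite product of finitely generated free algebras and dualize to a surjective pp-morphism $g$. You then pick $x$ over $t_{abc}$ and $\alpha,\beta\in\M{x}$ over $a,b$, and produce a point whose maxima are exactly $\{\alpha,\beta\}$. Your Step~1 replaces the citations of \cite[Lemmas~1.6, 2.5]{KowSlom} with a direct ultraproduct argument, and that is fine.

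The problem is Step~2: you do not prove it, and it asks for more than the argument needs. The condition $y\ge x$ is superfluous, because no point of $P$ at all has $\M{p}=\{a,b\}$; the sets $\M{p}$ are the four singletons and the four triples. So it is enough that some $z$ in the same component has $\M{z}=\{\alpha,\beta\}$. That every set of at most $m$ maxima is realized somewhere is exactly what the paper takes from \cite[Lemma~2.5(2)]{KowSlom}.

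If you want to prove Step~2 yourself, the picture you sketch, with points built recursively from antichains of earlier points, is the Heyting/Esakia one. Taken literally, it does not justify $y\ge x$. For instance, if $x$ is built from a single non-maximal point $w$ that was itself built from $\{\alpha,\beta,\gamma\}$, then the point built from $\{\alpha,\beta\}$ is not above $x$.

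For $p$-algebras the pp-condition only sees maximal elements, and the correct description is flat. The points of $\delta(\F(n))$ are pairs $(c,C)$, where $c$ is the point's colour and $\emptyset\neq C\subseteq 2^n$ is the set of colours of the maxima above it. They satisfy $c\subseteq\bigcap C$ and are ordered by $(c,C)\le(c',C')$ iff $c\subseteq c'$ and $C'\subseteq C$.

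Two facts justify this description:
\begin{itemize}
\item A point is determined by $(c,C)$, because membership in every element of the generated subalgebra depends only on the colour and on $\M{\cdot}$.
\item All such points and order relations occur, by the universal property applied to small finite coloured posets that realize a given pair or comparable pair.
\end{itemize}
With this description, $(c,S)$ gives your Step~2, and $(\emptyset,S)$ gives the paper's weaker version.

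Finally, your last sentence presupposes $\F_3(X)\in\mathcal F$. So Steps~1--2 must also cover the free algebras $\F_m(n)$ of $\Pa_m$, not only those of the whole variety. This is harmless: $\delta(\F_m(n))$ is the up-set of pairs with $|C|\le m$, and $|S|\le|\M{x}|\le m$. The paper handles this by working with $\F_{m_i}(k_i)$ throughout.
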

\begin{proof}
    For a contradiction, suppose $\A\in Q(\mathcal{F})$.     
    Because $\A$ is finite, \cite[Lemma 1.6]{KowSlom} gives $\A\in\mathbf{ISP}_{fin}(\mathcal{F})$, and so
    $\A\leq \Pi_{i}{\FF_i(X_i)}$, where $\FF_i(X_i)$ is a free $p$-algebra generated by the set $X_i$. Let $\pi_i:\A\to \FF_i(X_i)$ 
    be the coordinate  projections. Since $\A$ is finite, each image $\pi_i(\A)$ is a finite subalgebra of $\FF_i(X_i)$, hence it is contained in some finitely generated $\FF_i(k_i)$. 
    By \cite[Lemma 2.5]{KowSlom} finitely generated free $p$-algebras are of the form $\F_{m_i}(k_i)$, thus $\A\leq \prod_i\F_{m_i}(k_i)$. 
    By duality \cite[Lemma 1.7]{KowSlom} there 
    is a surjective pp-morphism 
    \[
        g:\; \biguplus_i P_i \twoheadrightarrow P,
        \qquad\text{where } P_i=\delta(F_{m_i}(k_i)).
    \]
    Pick the point $t_{abc}\in P$ and choose $i$ and 
    $y\in P_i$ such that $g(y)=t_{abc}$. The pp-condition of $g$
    gives $\M{g(y)} = g(\M{y})$.
    But $\M{g(y)}=\M{t_{abc}}=\{a,b,c\}$, so there 
    exist maximal elements
    $\alpha,\beta,\gamma\in \M{y}$ such that
        $g(\alpha)=a$, $g(\beta)=b$, $g(\gamma)=c$.
    It follows from \cite[Lemma 2.5(1)]{KowSlom}) that $k_i\geq 2$.
    Also, $m_i\geq 2$, because in $\F_1(k_i)$ there is no element $y$ 
    with $|\M{y}|=3$. Now apply \cite[Lemma~2.5(2)]{KowSlom} inside 
    $P_i$: every nonempty set of maxima of 
    size $\le m_i$ occurs as $\M{z}$ for some $z\in P_i$. In particular, there exists $z\in P_i$ such that $\M{z} = \{\alpha,\beta\}$.
    By the pp-condition it follows that
    \[
        \M{g(z)} = g(\M{z}) = \{g(\alpha),g(\beta)\}=\{a,b\}.
    \]
    But this is impossible in $P$: 
    there is no $x\in P$ with $\M{x}=\{a,b\}$.
\end{proof}

\begin{lemma}\label{lemma:3}
    $\A\in \Mod(\qb_3)$.
\end{lemma}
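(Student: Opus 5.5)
The plan is to argue dually, in the same way Lemmas \ref{lemma:1} and \ref{lemma:2} were handled. Since $\A=\varepsilon(P)$ is finite, I would use the duality results of \cite{KowSlom}. They translate satisfaction of the quasi-identities $\qb_3$ in a finite distributive $p$-algebra into a first-order condition on its dual poset. That condition is stated in terms of the sets $\M{x}$ of maximal elements above points $x$, and I expect it to be the combinatorial counterpart of the bounded-branching axioms. Step one is to recall this dual characterization precisely from \cite{KowSlom}. If it is only given for $\qb_3$ as a syntactic quasi-identity, I would derive it directly. To do so, I would interpret the terms of $\qb_3$ in $\varepsilon(P)$ as down-sets, with pseudocomplement $\neg U=\{x: \uparrow x\cap U=\emptyset\}$. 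The premises of $\qb_3$ then become equalities of such sets, and unwinding them pointwise gives a condition on $\M{x}$ for $x\in P$.

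Step two is the verification on $P$, which I would split by the two kinds of points. For a maximal element $u\in\{a,b,c,d\}$ we have $\M{u}=\{u\}$, and any condition of this type is trivially met. For a minimal element $t_{uvw}$ we have $\M{t_{uvw}}=\{u,v,w\}$, of size exactly $3$. This already gives the bound $|\M{x}|\le 3$ that underlies membership in $\Pa_3$, as Lemma \ref{lemma:1} shows. What remains is whatever extra content $\qb_3$ adds beyond $\Pa_3$. I expect this to be a condition on how the sets $\M{t}$ for different minimal $t$ may overlap, or on which subsets of maxima must be realized. For $P$ the relevant data are simple and symmetric: the four sets $\M{t}$ are exactly the four $3$-element subsets of $\{a,b,c,d\}$, any two of them meet in exactly two elements, and every maximal element lies in exactly three of them. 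Thanks to the full $S_4$-symmetry of $P$, it suffices to check a single configuration of each type, for example the pair $t_{abc},t_{abd}$.

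The main obstacle is step one: obtaining the correct dual reading of $\qb_3$. This matters because Lemma \ref{lemma:2} shows that $P$ violates the ``every $\le m$-subset of maxima is realized'' property that free algebras have. So it is essential that $\qb_3$ does \emph{not} demand the existence of a point $x$ with $\M{x}=\{a,b\}$. If the dual condition turns out to be awkward, my fallback is a direct algebraic check. The lattice $\A$ is small: it consists of the down-sets of an $8$-element poset, and it is generated up to symmetry by a few down-sets. Concretely, I would substitute elements into the premises of $\qb_3$. Each premise forces certain down-sets to avoid certain maxima, and I would then show that the conclusion follows by examining which points $t_{uvw}$ can lie in the relevant down-sets. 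This uses only the fact that every point lies below at most three maxima, together with the intersection pattern described above.
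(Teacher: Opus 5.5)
Your proposal has a genuine gap. It never determines what $\qb_3$ says, either as a quasi-identity or dually, so Step two is never carried out: you record features of $P$ and anticipate that they will satisfy a condition still to be found. The missing ingredient is \cite[Lemma 2.3]{KowSlom}. For finite $\A=\varepsilon(P)$, it says $\A\models\qb_3$ iff there is no surjective pp-morphism $P\twoheadrightarrow\delta(\BB_3)$; dually, $\BB_3$ is not a subalgebra of $\A$. This is not the kind of condition you expect. It is not checked point by point on the individual sets $\M{x}$ with the maximal points ``trivially'' fine, nor on pairs such as $t_{abc},t_{abd}$. It is a global condition on how the maxima of $P$ can be collapsed, and the maximal points are exactly where the argument takes place.

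With this characterization the proof is a short pigeonhole argument. A pp-morphism $f$ sends maxima to maxima, since $\M{f(u)}=f(\M{u})=\{f(u)\}$ for maximal $u$. Surjectivity then forces $f$ to map $\{a,b,c,d\}$ onto the three maxima of $\delta(\BB_3)$. Hence two maxima, say $a,b$, are identified, and some third maximum $c$ has $f(c)\neq f(a)$. Then $\M{f(t_{abc})}=f(\M{t_{abc}})=\{f(a),f(c)\}$ has exactly two elements. But every point of $\delta(\BB_3)$ lies below either one or all three maxima, a contradiction.

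The data you did isolate, that the sets $\M{t}$ are all four $3$-element subsets of $\{a,b,c,d\}$, is precisely what makes this work. Whichever pair of maxima is identified, some $t$ sees that pair together with exactly one other maximum. Without the dual characterization and this argument, however, the lemma is not proved. Your fallback ``direct algebraic check'' has the same defect, since it presupposes knowing the premises and conclusion of $\qb_3$.
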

\begin{proof}
    By duality, $\A\models \qb_3$ iff there is no surjective pp-morphism $P\twoheadrightarrow \delta(\BB_3)$ (see \cite[Lemma 2.3]{KowSlom}). By way of contradiction, suppose 
    $f:P\twoheadrightarrow \delta(\BB_3)$ is a surjective 
    pp-morphism. Since $P$ has four maximal elements and $\delta(\BB_3)$ has three maximal elements, it follows that two distinct maximal elements, say $a,b$, must satisfy
    $f(a)=f(b)$. Since $f$ is surjective on maximal elements, pick a third maximal element, say $c$, with $f(c)\neq f(a)$. Consider $t_{abc}$. Then 
    $\M{t_{abc}}=\{a,b,c\}$ and by the pp-condition
    \[
        \M{f(t_{abc})}=f(\M{t_{abc}})=\{f(a),f(b),f(c)\}=\{f(a),f(c)\}
    \]
    is a $2$-element set. But in $\delta(\BB_3)$ there is no element $x$ with $|\M{x}|=2$. Hence no such $f$ exists and
    $A\models \qb_3$.
\end{proof}

\noindent Recall $\Pa^-_3 = \Pa_{3}\cap \mathrm{Mod}(qb_3)$.

\begin{theorem}
    The inclusion $Q(F_3(X))\subseteq \Pa^-_3$ is proper, and
    $\Mod(\qb_3)$ is not generated by free $p$-algebras.
\end{theorem}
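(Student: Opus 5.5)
The theorem follows by combining the three lemmas about the single algebra $\A=\varepsilon(P)$, which serves as a witness for both claims.

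For the first claim, recall that $\Pa^-_3=\Pa_3\cap\Mod(\qb_3)$. Lemma~\ref{lemma:1} gives $\A\in\Pa_3$ and Lemma~\ref{lemma:3} gives $\A\in\Mod(\qb_3)$, so $\A\in\Pa^-_3$. Lemma~\ref{lemma:2} gives $\A\notin Q(\F_3(X))$ for every $X$. The inclusion $Q(\F_3(X))\subseteq\Pa^-_3$ is already established in \cite{KowSlom}, so it is proper, with $\A$ as a witness in the difference.

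For the second claim, I read ``$\Mod(\qb_3)$ is generated by free $p$-algebras'' as saying that the quasivariety $\Mod(\qb_3)$ (or its restriction $\Pa^-_3$ to $\Pa_3$, the setting of the question in \cite{KowSlom}) equals $Q(\mathcal G)$ for some class $\mathcal G\subseteq\mathcal F$ of free $p$-algebras lying in it. In either reading we have $Q(\mathcal G)\subseteq Q(\mathcal F)$. Lemma~\ref{lemma:2} shows $\A\notin Q(\mathcal F)$, while $\A\in\Mod(\qb_3)$ by Lemma~\ref{lemma:3}, and $\A\in\Pa^-_3$ as above. So no such $\mathcal G$ can generate the class, and the generation claim fails.

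The only point needing care is this interpretive step. One should check that ``generated by free $p$-algebras'' means generation as a quasivariety by members of $\mathcal F$, and that Lemma~\ref{lemma:2} covers free algebras on any number of generators and in every relevant subvariety $\Pa_m$. Lemma~\ref{lemma:2} is stated for the whole class $\mathcal F$, so this holds. Everything else is immediate from the lemmas.
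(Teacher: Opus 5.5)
Your proposal is correct and follows the paper's proof, which is just ``Immediate from the lemmas.'' You spell out the intended combination: $\A\in\Pa_3\cap\Mod(\qb_3)=\Pa^-_3$ by Lemmas~\ref{lemma:1} and~\ref{lemma:3}, while $\A\notin Q(\mathcal F)\supseteq Q(\F_3(X))$ by Lemma~\ref{lemma:2}, so $\A$ witnesses both claims.
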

\begin{proof}
    Immediate from the lemmas.
\end{proof}


\begin{thebibliography}{1}
\providecommand{\url}[1]{{#1}}
\providecommand{\urlprefix}{URL }
\expandafter\ifx\csname urlstyle\endcsname\relax
  \providecommand{\doi}[1]{DOI~\discretionary{}{}{}#1}\else
  \providecommand{\doi}{DOI~\discretionary{}{}{}\begingroup
  \urlstyle{rm}\Url}\fi

\bibitem{KowSlom}
Kowalski, T., Słomczyńska, K.: Quasivarieties of p-algebras: Some new
  results.
\newblock Studia Logica  (2025).
\newblock \doi{10.1007/s11225-025-10187-9}.
\newblock \urlprefix\url{https://doi.org/10.1007/s11225-025-10187-9}

\end{thebibliography}
\end{document}